\documentclass[11pt]{article}

\usepackage[english]{babel}
\usepackage{subfigure}
\usepackage[latin1]{inputenc}
\usepackage{verbatim}
\usepackage{psfrag}
\usepackage{mathrsfs}
\usepackage{bm}
\usepackage{amsmath}
\usepackage{amssymb}
\usepackage{amsfonts}
\usepackage{amsthm}
\usepackage{float}
\usepackage{setspace}
\usepackage{enumerate}
\usepackage{scrextend}

\newtheorem{thm}{Theorem}[section]

\newtheorem{lemma}[thm]{Lemma}

\newtheorem{claim}{Claim}[thm]

\theoremstyle{definition}

\title{Enumeration of extensions of the\\ cycle matroid of a complete graph}
\date{\today}

\author{Peter Nelson, Shayla Redlin, and Jorn van der Pol\\
\small Department of Combinatorics and Optimization\\[-0.8ex]
\small University of Waterloo\\[-0.8ex] 
\small Waterloo, Ontario, Canada\\}

\begin{document}

\maketitle

\begin{abstract}
    We prove that the number of single element extensions of $M(K_{n+1})$ is $2^{{n\choose n/2}(1+o(1))}$.
    This is done using a characterization of extensions as ``linear subclasses''.
\end{abstract}

%===================================================%
\section{Introduction}
\label{sec:intro}
%===================================================%

A matroid $N$ is an \textit{extension} of a matroid $M$ if $N\backslash e = M$ for some element $e$ of $N$ where $e$ is not a coloop. 
Similarly, a matroid $N$ is a \textit{coextension} of a matroid $M$ if $N/e = M$ for some element $e$ of $N$ where $e$ is not a loop.
Our goal is to enumerate the extensions and coextensions of $M(K_{n+1})$, the cycle matroid of a complete graph on $n+1$ vertices. In this paper, we prove the following result about the number of extensions.\footnote{In this paper, $o(1)$ denotes a function of $n$ which goes to $0$ as $n$ goes to infinity and we say ${n\choose x} := {n\choose \lfloor x \rfloor}$.}

\begin{thm}
\label{thm:ext_clique}
The number of extensions of $M(K_{n+1})$ is $2^{{n\choose n/2}(1+o(1))}$.
\end{thm}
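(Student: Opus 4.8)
The plan is to use the characterization of extensions of $M(K_{n+1})$ as linear subclasses. We describe a linear subclass concretely: the flats of $M(K_{n+1})$ are the partitions of $V=V(K_{n+1})$, the $2^n-1$ hyperplanes are the partitions into two nonempty parts (call these \emph{$2$-partitions}), and the rank-$(n-2)$ flats are the partitions into three nonempty parts (\emph{$3$-partitions}); a linear subclass is then a family $\mathcal H$ of $2$-partitions such that for every $3$-partition $\{A,B,C\}$, if at least two of $\{A\cup B,C\}$, $\{A\cup C,B\}$, $\{B\cup C,A\}$ lie in $\mathcal H$ then all three do. So the task is to show that the number of such $\mathcal H$ is $2^{\binom n{n/2}(1+o(1))}$.

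For the lower bound, call a family $\mathcal F$ of $2$-partitions \emph{crossing} if no two of its members are refined by a common $3$-partition; equivalently, for distinct $\{S,\bar S\}$, $\{T,\bar T\}$ in $\mathcal F$ all of $S\cap T$, $S\cap\bar T$, $\bar S\cap T$, $\bar S\cap\bar T$ are nonempty. If $\mathcal F$ is crossing then every $3$-partition refines at most one member of $\mathcal F$, so the linear-subclass condition is vacuous for $\mathcal F$ and for every subfamily of it; thus it suffices to produce a crossing family of size $(1-o(1))\binom n{n/2}$. When $n$ is odd the $\tfrac12\binom{n+1}{(n+1)/2}=\binom n{n/2}$ partitions of $V$ into two parts of size $(n+1)/2$ form a crossing family; when $n$ is even, fixing a vertex $v$ and taking all partitions of $V$ into a part of size $n/2$ containing $v$ and a part of size $n/2+1$ gives a crossing family of size $\binom n{n/2-1}=(1-o(1))\binom n{n/2}$. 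Hence the number of extensions is at least $2^{(1-o(1))\binom n{n/2}}$.

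For the upper bound, set $s=\lceil\sqrt n\log n\rceil$ and call a $2$-partition \emph{narrow} if its smaller part has at most $\tfrac{n-1}{2}-s$ elements, and \emph{wide} otherwise. By a Chernoff bound the number of narrow $2$-partitions is at most $2^{n+1}e^{-2(s+1)^2/(n+1)}=2^{o(\binom n{n/2})}$, so conditioning on the (arbitrary) narrow part $\mathcal N$ of a linear subclass $\mathcal H$ costs only a factor $2^{o(\binom n{n/2})}$, and it is enough to bound, for each $\mathcal N$, the number of possible wide parts $\mathcal W=\mathcal H\cap\{\text{wide }2\text{-partitions}\}$. The structural point is that any two nested wide $2$-partitions have a narrow common coarsening: if $\{S,\bar S\}$ and $\{T,\bar T\}$ are wide with $T\subsetneq S$, the three coarsenings of the $3$-partition $\{T,S\setminus T,\bar S\}$ are $\{S,\bar S\}$, $\{T,\bar T\}$ and $\{S\setminus T,\overline{S\setminus T}\}$, and $|S\setminus T|\le 2s+2$ is narrow. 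Applying the linear-subclass condition to such $3$-partitions shows that, given $\mathcal N$, the wide part $\mathcal W$ may contain both members of a nested pair of wide $2$-partitions only if their narrow common coarsening lies in $\mathcal N$ (and in that case the two are forced in or out together), while $3$-partitions with two narrow coarsenings merely force some wide $2$-partitions into or out of $\mathcal W$. In particular $\mathcal W$ is a crossing family of wide $2$-partitions, modulo the bounded amount of merging and forcing dictated by the $o(\binom n{n/2})$ $2$-partitions in $\mathcal N$.

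It remains to count these near-crossing families, and this is the main obstacle. The extremal input is the LYM-type bound that every crossing family of $2$-partitions of an $(n+1)$-set has at most $\binom n{\lfloor n/2\rfloor}$ members: selecting from each $2$-partition the part containing a fixed vertex $v$ turns a crossing family into an antichain in the Boolean lattice on the other $n$ vertices, so Sperner's theorem applies. Together with a Kleitman/Korshunov-type counting argument (the asymptotics of the Dedekind number) this should give that the number of crossing families of $2$-partitions is $2^{\binom n{n/2}(1+o(1))}$; the remaining work is to check that the extra data attached to $\mathcal N$ — the ``both or neither'' merging of wide $2$-partitions and the forced-in / forced-out sets — inflates this count only by a factor $2^{o(\binom n{n/2})}$, after which summing over the $2^{o(\binom n{n/2})}$ choices of $\mathcal N$ yields the claimed bound $2^{\binom n{n/2}(1+o(1))}$ on the number of extensions. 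Reconciling the entropy/container argument behind the Dedekind asymptotics with this $\mathcal N$-dependent structure is where I expect the genuine difficulty to lie.
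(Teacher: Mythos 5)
Your lower bound is correct and is essentially the paper's: after choosing from each $2$-partition the part avoiding a fixed vertex, your ``crossing families'' are exactly intersecting antichains in $\mathcal{P}(n)$, and taking all sets of one size just above $n/2$ (equivalently, your near-balanced bipartitions) gives $2^{(1-o(1))\binom{n}{n/2}}$ linear subclasses. The upper bound, however, has a genuine gap, and it is precisely the one you flag yourself at the end: you reduce the problem to counting ``near-crossing'' families of wide $2$-partitions relative to an arbitrary narrow part $\mathcal{N}$, but you never show that the merging/forcing structure induced by $\mathcal{N}$ inflates the count of genuine crossing families by only $2^{o(\binom{n}{n/2})}$. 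This is not a routine check: when $\mathcal{N}$ is nonempty the wide part need not be a crossing family at all (nested wide pairs whose narrow coarsening lies in $\mathcal{N}$ may coexist), so Sperner/Kleitman does not apply to it directly, and the equivalence-classing you sketch both shrinks the ground set (helping you) and relaxes the crossing condition (hurting you) in ways that must be balanced by an actual argument. As written, the proof establishes only the lower bound.

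The paper closes exactly this gap with a compression argument that you are missing, and it is worth comparing. The key observation is that in any triple of hyperplanes through a common rank-$(n-2)$ flat --- in subset language, any triple $\{X_0,X_1,X_0\cup X_1\}$ with $X_0,X_1$ disjoint and nonempty --- the three sets $X_0$, $X_1$, $(X_0\cup X_1)^c$ partition $[n]$, so at least one member $X$ of the triple satisfies $\min\{|X|,|X^c|\}\le n/3$. Letting $\mathcal{S}$ be the collection of all such ``unbalanced'' sets, one has $|\mathcal{S}|\le 2\binom{n}{\le n/3}=o\bigl(\binom{n}{n/2}\bigr)$. One then defines a map $\phi$ that deletes, from a linear set $\mathcal{B}$, the $\prec$-smallest and $\prec$-largest elements of every triple fully contained in $\mathcal{B}$ (for a suitable linear order $\prec$ putting $\mathcal{S}$ first); the image is a scarce linear set, i.e.\ an intersecting antichain, and a minimal-counterexample argument shows $\mathcal{B}$ is reconstructible from the pair $(\phi(\mathcal{B}),\,\mathcal{B}\cap\mathcal{S})$. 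This gives $|\mathcal{Q}(n)|\le|\mathcal{Q}_s(n)|\cdot 2^{|\mathcal{S}|}$ and hence the upper bound via Kleitman. Note that the relevant threshold is $n/3$, not your $n/2-\Theta(\sqrt{n}\log n)$: your threshold makes the conditioning cheap but does not guarantee that every violating triple contains a recorded set, which is what makes the reconstruction (and hence the whole upper bound) go through.
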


It is well known that coextensions of graphic matroids correspond to biased graphs, 
which are independently well-studied \cite{zaslavsky89, Zaslavsky, zaslavsky95}.
In \cite{peter_jorn}, Nelson and Van der Pol proved that the number of biased graphs on the complete graph $K_{n+1}$ is $2^{\frac{1}{2}n!(1+o(1))}$, which implies the following theorem.

\begin{thm}[\cite{peter_jorn}]
\label{thm:coext_clique}
The number of coextensions of $M(K_{n+1})$ is $2^{\frac{1}{2}n!(1+o(1))}$.
\end{thm}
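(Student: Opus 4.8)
The plan is to derive Theorem~\ref{thm:coext_clique} from the biased-graph enumeration of \cite{peter_jorn} by spelling out the well-known correspondence between single-element coextensions of $M(G)$ and biased graphs on $G$, and checking that it is a count-preserving bijection when $G=K_{n+1}$. I fix the new element $e$, so that a coextension is a matroid $N$ on ground set $E(G)\cup\{e\}$ with $N/e=M(G)$ and $e$ not a loop. The first step is the forward map $N\mapsto\mathcal{B}(N)$. Each cycle $C$ of $G$ is a circuit of $M(G)=N/e$, and using $r_{N/e}(S)=r_N(S\cup e)-1$ (valid since $e$ is not a loop) a short rank computation forces exactly one of two alternatives in $N$: either $C$ is itself a circuit of $N$, or $C$ is independent in $N$ while $C\cup\{e\}$ is a circuit of $N$. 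Calling a cycle \emph{balanced} in the first case and \emph{unbalanced} in the second defines a collection $\mathcal{B}(N)$ of cycles of $G$.

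The second step, which I expect to be the only genuinely technical point, is to show that $\mathcal{B}(N)$ is the balanced class of a biased graph, i.e.\ that it satisfies the theta property. Given a theta subgraph with internally disjoint paths $P_1,P_2,P_3$ two of whose three cycles are balanced, the two corresponding circuits of $N$ meet in exactly the shared path; applying the strong circuit-elimination axiom to them and comparing the result against the forced dichotomy of the first step pins the third cycle down as balanced as well. This is exactly Zaslavsky's identification of lift coextensions with biased graphs \cite{zaslavsky89, Zaslavsky, zaslavsky95}, so I would record the dichotomy, invoke the theta-property verification from these references, and not reprove the axiom manipulation in full.

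For the inverse, Zaslavsky's extended lift matroid $L_0(G,\mathcal{B})$ on $E(G)\cup\{e\}$ is a matroid precisely because $\mathcal{B}$ has the theta property, it satisfies $L_0(G,\mathcal{B})/e=M(G)$, and its circuits avoiding $e$ are exactly the balanced cycles; hence $\mathcal{B}$ is recovered by the same dichotomy, so $N\mapsto\mathcal{B}(N)$ and $(G,\mathcal{B})\mapsto L_0(G,\mathcal{B})$ are mutually inverse. This gives a bijection between coextensions of $M(G)$ and biased graphs on $G$. Specializing to $G=K_{n+1}$ and invoking \cite{peter_jorn}, which gives $2^{\frac{1}{2}n!(1+o(1))}$ biased graphs on $K_{n+1}$, the bijection transports the count verbatim, proving the theorem. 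Since the new element $e$ is fixed throughout, the correspondence is exact; and in any case any bounded or polynomial discrepancy in the counting conventions would be harmless, as it is absorbed into the factor $2^{\frac{1}{2}n!\,o(1)}$.
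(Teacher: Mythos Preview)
Your proposal is correct and follows the same route as the paper: the paper does not actually prove Theorem~\ref{thm:coext_clique} but simply cites the biased-graph count from \cite{peter_jorn} together with the well-known correspondence (attributed to Zaslavsky \cite{zaslavsky89, Zaslavsky, zaslavsky95}) between single-element coextensions of a graphic matroid and biased graphs on the underlying graph. Your write-up merely makes that correspondence explicit via the lift-matroid construction, which is entirely in line with what the paper invokes.
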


The proof of Theorem \ref{thm:ext_clique} has a structure similar to the proof of Theorem~\ref{thm:coext_clique}. The strategy begins by finding a natural lower bound for the number of extensions or coextensions. This is done by finding a large set of objects that describes an extension or coextension such that each subset also describes an extension or coextension. An upper bound is then determined by viewing the large sets that describe extensions or coextensions as stable sets in an auxiliary graph and applying the container method. 
The Boolean lattice is used as the auxiliary graph in this paper, so known results about enumerating antichains in a Boolean lattice are used instead of a direct application of the container method.

Cycle matroids of complete graphs, which we sometimes refer to as cliques, are highly symmetric and dense, so we might expect the collection of extensions and coextensions to be tame and well-understood.
However, Theorems \ref{thm:ext_clique} and \ref{thm:coext_clique} imply that the number of matroids that are one contraction or deletion away from being a clique on $N$ elements is ``close'' to~$2^{2^{\sqrt{N}}}$.
In comparison, the number of extensions of a projective geometry with $N$ elements is ``close'' to $2^{\log ^2 N}$.\footnote{In this paper, $\log$ denotes the base-2 logarithm.} This implies that the number of extensions and coextensions of a clique is much larger than the number of extensions of some other classes of matroids.

For $N \in \mathbb{N}$, let $m(N)$ denote the number of matroids on $N$ elements.
Knuth \cite{knuth_74} found a large family of matroids on $N$ elements in 1974 which implies the lower bound $\log\log m(N) \ge N(1-o(1))$.
This combined with the trivial upper bound of $\log\log m(N) \le N$ implies that $m(N)$ is ``close'' to $2^{2^N}$.
Thus, the number of extensions and coextensions of a clique on $N$ elements is close to the total number of matroids on $N$ elements in the sense that they are both doubly exponential in $N$.

The rest of the paper is organized as follows. In Section \ref{sec:prelim}, we set up the proof of Theorem \ref{thm:ext_clique} with some preliminary definitions and lemmas.
The proof itself is then provided in Section \ref{sec:proof}.

%===================================================%
\section{Preliminaries}
\label{sec:prelim}
%===================================================%

In this paper, we follow the definitions and notation of Oxley \cite{oxley}. We begin by reviewing a few relevant definitions.
For a matroid $M$, let $\mathcal{H}(M)$ denote the set of hyperplanes of $M$.
A \textit{linear subclass} of a matroid $M$ is a subset $\mathcal{H'}\subseteq \mathcal{H}(M)$ such that if two hyperplanes $F,F' \in \mathcal{H}'$ intersect in a flat $S$ of rank $r(M)-2$, then all hyperplanes $F''$ that contain $S$ are in $\mathcal{H}'$ as well.
In~\cite{crapo}, Crapo proved that the linear subclasses of a matroid $M$ parameterize the extensions of $M$.

For an integer $n$, let $[n] = \{1,2,\dots,n\}$ and let $K_{n}$ be the complete graph on vertex set $[n]$.
Recall that the clique $M(K_{n})$ is the cycle matroid of $K_{n}$.
Our goal is to enumerate the extensions of $M(K_{n+1})$. By using Crapo's parameterization, it suffices to enumerate the linear subclasses of $M(K_{n+1})$. 
Note that the rank of $M(K_{n+1})$ is $n$.
Let $\mathcal{E}(n+1)$ denote the set of extensions of $M(K_{n+1})$ and let $\mathcal{L}(n+1)$ denote the set of linear subclasses of $M(K_{n+1})$.
Let $\mathcal{P}(n)$ denote the power set of $[n]$.

Recall that a biased graph is a pair $(G,\mathcal{B})$ where $G$ is a finite graph and $\mathcal{B}$ is a collection of \textit{balanced} cycles of $G$; that is, a collection of cycles where there do not exist cycles $C_1,C_2,C_3$ in a theta-subgraph of $G$ for which $|\{C_1,C_2,C_3\}\cap \mathcal{B}| = 2$.
This motivates the following definition.
We say a collection of sets $\mathcal{B}\neq \{\emptyset\}$ is \textit{linear} if there do not exist disjoint sets $X,Y$ for which $|\{X,Y,X\cup Y\}\cap \mathcal{B}| = 2$.
Observe that no linear set can contain the empty set: If $\mathcal{B}$ is a linear set and $\emptyset,X\in \mathcal{B}$, then $|\{\emptyset, X, X\cup \emptyset\}\cap \mathcal{B}| = 2$.
Let $\mathcal{Q}(n)$ denote the set of linear subsets of $\mathcal{P}(n)$.

The proof of the following lemma shows that linear subsets of $\mathcal{P}(n)$ correspond to linear subclasses of $M(K_{n+1})$.
In this proof, it is helpful to think of flats of $M(K_{n+1})$ as partitions of vertices. 
We say a \textit{$k$-partition} of a set $S$ is an unordered partition of $S$ into $k$ nonempty parts. 
A set $F$ is a \text{rank-$(n+1-k)$} flat of $M(K_{n+1})$ if and only if the subgraph $([n+1],F)$ has precisely $k$ components, each of which is complete.
Thus, rank-$(n+1-k)$ flats of $M(K_{n+1})$ correspond to $k$-partitions of $[n+1]$.
Flats $F$ and $F'$ satisfy $F \subseteq F'$ if and only if the partition corresponding to $F$ refines the partition for $F'$.

%----- Lemma ---------------------------------------%
\begin{lemma}
\label{lem:bijection}
$|\mathcal{E}(n+1)| = |\mathcal{Q}(n)|$.
\end{lemma}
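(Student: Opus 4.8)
The plan is to deduce this from Crapo's parameterization together with an explicit bijection between $\mathcal{L}(n+1)$ and $\mathcal{Q}(n)$. Since the linear subclasses of $M(K_{n+1})$ parameterize its extensions, we have $|\mathcal{E}(n+1)| = |\mathcal{L}(n+1)|$, so it suffices to produce a bijection $\mathcal{L}(n+1) \to \mathcal{Q}(n)$.

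First I would record the relevant flat structure. Because $r(M(K_{n+1})) = n$, the hyperplanes are the rank-$(n-1)$ flats, which correspond to the $2$-partitions of $[n+1]$, while the rank-$(n-2)$ flats correspond to the $3$-partitions of $[n+1]$; each rank-$(n-2)$ flat $S$, say the $3$-partition $\{T_1, T_2, T_3\}$, lies below exactly three hyperplanes, those of the $2$-partitions $\{T_1 \cup T_2, T_3\}$, $\{T_1 \cup T_3, T_2\}$, $\{T_2 \cup T_3, T_1\}$, and any two distinct hyperplanes whose intersection has rank $n-2$ arise in this way for a unique such $S$. The bijection to build is the map $\phi$ sending a hyperplane, written as a $2$-partition $\{A, B\}$ of $[n+1]$, to whichever of $A$, $B$ avoids the vertex $n+1$; this is a bijection from $\mathcal{H}(M(K_{n+1}))$ onto the nonempty subsets of $[n]$, with inverse $A \mapsto \{A, [n+1] \setminus A\}$.

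Next I would push the linear-subclass condition through $\phi$. Given a rank-$(n-2)$ flat corresponding to $\{T_1, T_2, T_3\}$ with $n+1 \in T_3$, put $X = T_1$ and $Y = T_2$, which are disjoint nonempty subsets of $[n]$; a direct check shows $\phi$ carries the three hyperplanes above it to $X$, $Y$, and $X \cup Y$, and that as the flat varies the unordered pair $\{X, Y\}$ ranges over all pairs of disjoint nonempty subsets of $[n]$. Consequently $\mathcal{H}' \subseteq \mathcal{H}(M(K_{n+1}))$ fails to be a linear subclass exactly when some rank-$(n-2)$ flat has exactly two of its three hyperplanes in $\mathcal{H}'$, i.e.\ exactly when there are disjoint nonempty $X, Y \subseteq [n]$ with $|\{X, Y, X \cup Y\} \cap \phi(\mathcal{H}')| = 2$. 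Since $\phi(\mathcal{H}')$ consists only of nonempty sets, allowing $X$ or $Y$ to be empty creates no further violations, so this says precisely that $\phi(\mathcal{H}') \in \mathcal{Q}(n)$ (and $\phi(\mathcal{H}') \neq \{\emptyset\}$ automatically). Hence $\phi$ restricts to a bijection $\mathcal{L}(n+1) \to \mathcal{Q}(n)$, and therefore $|\mathcal{E}(n+1)| = |\mathcal{L}(n+1)| = |\mathcal{Q}(n)|$.

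The argument is short in substance, and essentially all of it is the bookkeeping between flats and partitions. I expect the main obstacle to be verifying that $\phi$ and the passage to $X = T_1$, $Y = T_2$ match the hypotheses of the two definitions exactly --- that the pairs $(F, F')$ of hyperplanes whose meet has rank $n-2$ correspond to precisely the disjoint nonempty pairs $\{X, Y\}$, with the correct multiplicities, and that the degenerate configurations (the class $\{n+1\}$, and the empty set, which the definition of a linear set must exclude) cause no trouble.
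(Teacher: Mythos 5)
Your proposal is correct and follows essentially the same route as the paper: reduce to counting linear subclasses via Crapo, map each hyperplane (a $2$-partition of $[n+1]$) to its part avoiding $n+1$, and check that the three hyperplanes over a rank-$(n-2)$ flat correspond exactly to a related triple $\{X,Y,X\cup Y\}$ of nonempty disjoint subsets of $[n]$. Your attention to the degenerate cases (empty parts and the exclusion of $\{\emptyset\}$) is the same bookkeeping the paper handles via its observation that no linear set contains $\emptyset$.
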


\begin{proof}
By Crapo's parameterization of extensions as linear subclasses, we have that $\mathcal{E}(n+1)$ is in bijection with $\mathcal{L}(n+1)$; therefore, it suffices to prove that there exists a bijection between $\mathcal{L}(n+1)$ and $\mathcal{Q}(n)$.
For this proof, let $M$ denote $M(K_{n+1})$.

For each hyperplane $H$ of $M$, let $\psi(H)\subseteq [n]$ be the vertex set of the unique component of $([n+1],H)$ not containing the vertex $(n+1)$.
Since, for each nonempty $X\subseteq [n]$, the partition $\{X,[n+1]\setminus X\}$ gives rise to a hyperplane $H$ of $M$ with $\psi(H) = X$, the function $\psi$ is a bijection from the set of hyperplanes of $M$ to $\mathcal{P}(n)\setminus \{\emptyset\}$.

%------- Claim -----------------------%
\begin{claim}
\label{clm:triples}
Let $\mathcal{F}$ be a collection of at least three hyperplanes of $M$ for which $r(\cap \mathcal{F}) = n-2$. Then $|\mathcal{F}|=3$ and there are disjoint nonempty sets $S,T \subseteq [n]$ for which $\psi(\mathcal{F}) = \{S,T,S\cup T\}$.
Moreover, all disjoint nonempty sets $S,T\subseteq [n]$ satisfy $r(\psi^{-1}(S)\cap \psi^{-1}(T)\cap \psi^{-1}(S\cup T)) = n-2$.
\end{claim}

\begin{proof}
The intersection $\cap \mathcal{F}$ is a flat of rank $n-2$, so it corresponds to a $3$-partition of $[n+1]$, say $\{X_0,X_1,X_2\}$.
Since hyperplanes correspond to $2$-partitions of $[n+1]$, a hyperplane that contains $\cap \mathcal{F}$ corresponds to a partition with parts $X_i$ and $X_j\cup X_k$ where $\{i,j,k\} = \{0,1,2\}$. There are three such partitions; thus $|\mathcal{F}| = 3$.
Without loss of generality, we may assume that $n+1 \in X_2$, hence $\psi(\mathcal{F}) = \{X_0,X_1,X_0\cup X_1\}$.

Now suppose $S,T \subseteq [n]$ are disjoint nonempty sets. The partition $\{S,[n+1]\setminus S\}$ corresponds to the hyperplane $\psi^{-1}(S)$. Similarly, we know that $\{T,[n+1]\setminus T\}$ corresponds to $\psi^{-1}(T)$ and $\{S\cup T,[n+1]\setminus (S\cup T)\}$ corresponds to $\psi^{-1}(S\cup T)$.
The coarsest common refinement of the partitions $\{S,[n+1]\setminus S\}$, $\{T,[n+1]\setminus T\}$, and $\{S\cup T,[n+1]\setminus (S\cup T)\}$ is the $3$-partition $\{S, T,[n+1]\setminus (S\cup T)\}$, which corresponds to a flat of rank $n-2$. Thus, it follows that $r(\psi^{-1}(S)\cap \psi^{-1}(T)\cap \psi^{-1}(S\cup T)) = n-2$.
\end{proof}
%--------------------------------------------%

Since the function $\psi$ gives a bijection between hyperplanes of $M$ and nonempty subsets of $[n]$, it remains to show that a set $\mathcal{H}'$ of hyperplanes of~$M$ is a linear subclass if and only if $\psi(\mathcal{H}')$ is a linear collection of subsets of~$[n]$.
Let $\mathcal{H}'$ be a set of hyperplanes of $M$ and let $\mathcal{S} = \{\psi(H): H\in \mathcal{H}'\}$.

Suppose $\mathcal{S}$ is not linear. Thus, there exists a 3-set $\{X,Y,X\cup Y\}$ where $X$ and $Y$ are disjoint and nonempty such that $|\mathcal{S}\cap \{X,Y,X\cup Y\}| = 2$.
This implies that $\mathcal{H}'\cap \{\psi^{-1}(X),\psi^{-1}(Y),\psi^{-1}(X\cup Y)\} = 2$.
By Claim \ref{clm:triples}, we know that $r(\psi^{-1}(X)\cap \psi^{-1}(Y)\cap \psi^{-1}(X\cup Y)) = n-2$.
Thus, exactly two hyperplanes in $\mathcal{H}'$ contain the rank-$(n-2)$ flat $\psi^{-1}(X)\cap \psi^{-1}(Y)\cap \psi^{-1}(X\cup Y)$, so $\mathcal{H}'$ is not a linear subclass.

Now, suppose $\mathcal{H}'$ is not a linear subclass. Thus, there exist hyperplanes $H_0,H_1,H_2$ that contain a flat $F$ of rank $n-2$ such that $H_0,H_1\in \mathcal{H}'$ and $H_2\notin \mathcal{H}'$. By Claim \ref{clm:triples}, we know that $\{\psi(H_0),\psi(H_1),\psi(H_2)\} = \{X, Y, X\cup Y\}$ where $X$ and $Y$ are nonempty disjoint subsets of $[n]$.
This implies that $|\mathcal{S}\cap \{X,Y,X\cup Y\}| = 2$, so $\mathcal{S}$ is not linear.
\end{proof}
%--------------------------------------------%

A linear set $\mathcal{B}$ is \textit{scarce} if $|\{X,Y,X\cup Y\}\cap\mathcal{B}| \le 1$ for all disjoint sets $X,Y$.
Let $\mathcal{Q}_s(n)$ denote the set of scarce linear subsets of $\mathcal{P}(n)$.
Since $\mathcal{Q}_s(n)$ is a subset of $\mathcal{Q}(n)$, Lemma \ref{lem:bijection} implies that the number of scarce linear subsets of $\mathcal{P}(n)$ is a lower bound for the number of extensions of $M(K_{n+1})$.

%===================================================%
\section{The Proof}
\label{sec:proof}
%===================================================%

In the previous section, we found a representation of hyperplanes of a clique $M(K_{n+1})$ as nonempty subsets of $[n]$, which was used to show a bijection exists between extensions and linear subsets of $\mathcal{P}(n)$.
In this section, we prove Theorem \ref{thm:ext_clique} by first counting the number of scarce linear subsets of $\mathcal{P}(n)$ by comparing them to antichains in the Boolean lattice.
This will give us a lower bound for the number of extensions.
To complete the proof, we show that $|\mathcal{Q}(n)|$, the number of linear subsets, is not ``much more'' than $|\mathcal{Q}_s(n)|$, the number of scarce linear subsets, thereby giving us an upper bound for the number of extensions.

Consider the poset on $\mathcal{P}(n)$ partially ordered by the subset relation, also referred to as the Boolean lattice and denoted $\mathcal{P}(n)$.
An \textit{antichain} of $\mathcal{P}(n)$ is a set of elements $A$ such that no two elements in $A$ are comparable. An \textit{intersecting antichain} is an antichain $A$ where $X \cap Y \neq \emptyset$ for all $X,Y\in A$. That is, a collection $A$ of sets is an intersecting antichain if and only if no two sets $X,Y$ in $A$ are comparable or disjoint.
Let $\mathcal{A}_I(n)$ denote the set of intersecting antichains containing nonempty elements of $\mathcal{P}(n)$.

\begin{lemma}
\label{lem:stable_sets}
    $\mathcal{Q}_s(n)=\mathcal{A}_I(n)$.
\end{lemma}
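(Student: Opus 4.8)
The plan is to prove the set equality $\mathcal{Q}_s(n) = \mathcal{A}_I(n)$ by showing each side is contained in the other, working directly from the definitions. Recall that $\mathcal{Q}_s(n)$ consists of those $\mathcal{B} \subseteq \mathcal{P}(n)$ with $\mathcal{B} \neq \{\emptyset\}$ such that $|\{X,Y,X\cup Y\}\cap \mathcal{B}| \le 1$ for every pair of disjoint sets $X,Y$; and $\mathcal{A}_I(n)$ consists of the intersecting antichains of nonempty subsets of $[n]$, i.e.\ collections $A$ of nonempty sets such that no two members are comparable and no two members are disjoint.

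\textbf{$\mathcal{Q}_s(n) \subseteq \mathcal{A}_I(n)$.} Let $\mathcal{B}$ be a scarce linear set. First, $\mathcal{B}$ contains only nonempty sets: this is already observed in the excerpt for linear sets (if $\emptyset, X \in \mathcal{B}$ then $|\{\emptyset,X,X\cup\emptyset\}\cap\mathcal{B}| = 2$), and scarce linear sets are linear. Next, suppose $A, B \in \mathcal{B}$ are disjoint; then taking $X = A$, $Y = B$ we would have $A, B \in \{A, B, A\cup B\}\cap \mathcal{B}$, so $|\{A,B,A\cup B\}\cap \mathcal{B}| \ge 2$, contradicting scarceness. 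Hence no two members are disjoint. Finally, suppose $A, B \in \mathcal{B}$ with $A \subsetneq B$. Set $X = A$ and $Y = B \setminus A$; these are disjoint (and $Y$ is nonempty since $A \subsetneq B$), and $X \cup Y = B$, so $A, B \in \{X, Y, X\cup Y\}\cap \mathcal{B}$, again giving $|\{X,Y,X\cup Y\}\cap\mathcal{B}| \ge 2$, a contradiction. So $\mathcal{B}$ is an intersecting antichain of nonempty sets, i.e.\ $\mathcal{B} \in \mathcal{A}_I(n)$.

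\textbf{$\mathcal{A}_I(n) \subseteq \mathcal{Q}_s(n)$.} Let $A \in \mathcal{A}_I(n)$. Since every member of $A$ is nonempty, $A \neq \{\emptyset\}$. Suppose for contradiction that $A$ is not scarce: there are disjoint sets $X, Y$ with $|\{X, Y, X\cup Y\}\cap A| \ge 2$. There are three cases for which two of the triple lie in $A$. If $X, Y \in A$, then $X$ and $Y$ are disjoint members of $A$, contradicting that $A$ is intersecting. If $X, X\cup Y \in A$ (the case $Y, X\cup Y\in A$ is symmetric), then since $X$ and $Y$ are disjoint and both $X, Y$ could be empty we must be slightly careful: if $Y = \emptyset$ then $X = X\cup Y$, so these are the same set, not two distinct members, and $|\{X,Y,X\cup Y\}\cap A|$ counts $X = X\cup Y$ once and $\emptyset$ not at all (as $\emptyset \notin A$), giving at most $1$; so $Y \neq \emptyset$, and then $X \subsetneq X\cup Y$ are distinct comparable members of $A$, contradicting that $A$ is an antichain. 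In every case we reach a contradiction, so $A$ is a scarce linear set, i.e.\ $A \in \mathcal{Q}_s(n)$.

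The argument is essentially a definition-chase, so there is no serious obstacle; the only point requiring care is the bookkeeping around possibly-empty sets $X$ or $Y$ in the definition of scarceness (the triple $\{X, Y, X\cup Y\}$ may have fewer than three distinct elements, and $\emptyset$ is never a member of either an intersecting antichain or a linear set), which must be handled so that ``$|\{X,Y,X\cup Y\}\cap\mathcal{B}| \ge 2$'' genuinely forces two distinct members into $\mathcal{B}$. Once that is pinned down, combining the two inclusions gives $\mathcal{Q}_s(n) = \mathcal{A}_I(n)$.
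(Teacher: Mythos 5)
Your proof is correct and takes essentially the same route as the paper: the paper compresses your two inclusions and case analysis into the single observation that two nonempty subsets of $[n]$ lie in a triple $\{X,Y,X\cup Y\}$ with $X,Y$ disjoint and nonempty if and only if they are comparable or disjoint. Your extra care about degenerate triples (where $X$ or $Y$ is empty) is a reasonable expansion of the same idea, not a different argument.
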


\begin{proof}
Scarce linear sets in $\mathcal{Q}_s(n)$ and intersecting antichains in $\mathcal{A}_I(n)$ are both subsets of $\mathcal{P}(n)\setminus \{\emptyset\}$. Since two elements of $\mathcal{P}(n)\setminus \{\emptyset\}$ are in a 3-set $\{X,Y,X\cup Y\}$ for nonempty disjoint $X,Y\subseteq [n]$ if and only if they are comparable or disjoint, it immediately follows that a subset of $\mathcal{P}(n)\setminus \{\emptyset\}$ is a scarce linear set if and only if it is an intersecting antichain.
\end{proof}

At this point, our goal is to determine $|\mathcal{A}_I(n)|$, the number of intersecting antichains in the Boolean lattice $\mathcal{P}(n)\setminus \{\emptyset\}$.

\begin{lemma}
\label{lem:lower_bound}
    $|\mathcal{A}_I(n)| \ge 2^{{n \choose \lceil (n+1)/2 \rceil}}$.
\end{lemma}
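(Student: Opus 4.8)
The goal is to exhibit at least $2^{\binom{n}{\lceil (n+1)/2 \rceil}}$ intersecting antichains of nonempty subsets of $[n]$. The plan is to find one large intersecting antichain $\mathcal{M}$ and observe that \emph{every} subcollection of $\mathcal{M}$ is again an intersecting antichain, so the number of intersecting antichains is at least $2^{|\mathcal{M}|}$.

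First I would fix $k = \lceil (n+1)/2 \rceil$ and let $\mathcal{M} = \binom{[n]}{k}$, the collection of all $k$-element subsets of $[n]$. This is clearly an antichain, since no two distinct sets of the same size are comparable. To see that it is intersecting, I would check that any two $k$-subsets $X, Y$ of $[n]$ satisfy $|X \cap Y| \ge 2k - n \ge 1$; the inequality $2k - n \ge 1$ holds precisely because $k = \lceil (n+1)/2 \rceil \ge (n+1)/2$. Hence $\mathcal{M} \in \mathcal{A}_I(n)$, and $|\mathcal{M}| = \binom{n}{k} = \binom{n}{\lceil (n+1)/2 \rceil}$.

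Next I would observe that the defining properties of an intersecting antichain — no two members comparable, no two members disjoint, all members nonempty — are all hereditary: if $\mathcal{M}$ has these properties then so does any subcollection $\mathcal{M}' \subseteq \mathcal{M}$. Therefore the map $\mathcal{M}' \mapsto \mathcal{M}'$ embeds the power set of $\mathcal{M}$ into $\mathcal{A}_I(n)$, giving $|\mathcal{A}_I(n)| \ge 2^{|\mathcal{M}|} = 2^{\binom{n}{\lceil (n+1)/2 \rceil}}$, as required.

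There is no real obstacle here; the only point requiring a moment's care is the intersection bound $|X \cap Y| \ge 2k - n$ and the verification that the chosen $k$ makes this at least $1$ while keeping $k \le n$ (so that $\binom{n}{k}$ is a genuine binomial coefficient on nonempty sets). One could alternatively take $\mathcal{M}$ to be all subsets of size exactly $\lceil (n+1)/2 \rceil$ described via a fixed element, but the uniform-size construction above is the cleanest and already yields the stated bound.
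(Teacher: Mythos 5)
Your proof is correct and is essentially identical to the paper's: the paper also takes all subsets of a fixed size $\lfloor n/2\rfloor+1=\lceil (n+1)/2\rceil$, notes that they form an intersecting antichain because that size exceeds $n/2$, and counts all subcollections. No issues.
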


\begin{proof}
Let $A$ be the collection of subsets of $[n]$ that have size $\lfloor \frac{n}{2} \rfloor + 1$. 
There are ${n \choose \lfloor n/2 \rfloor + 1}$ such subsets, hence $A$ has size ${n \choose \lfloor n/2 \rfloor + 1}$.

Since all sets in $A$ have the same size, it follows that $A$ is an antichain of $\mathcal{P}(n)$. Furthermore, since $\lfloor \frac{n}{2} \rfloor + 1 > \frac{n}{2}$, each pair of sets in $A$ intersect in a nonempty set. This implies that $A$ is an intersecting antichain of $\mathcal{P}(n)$.

Notice that each subset of $A$ is also an intersecting antichain. Therefore, there are at least $2^{{n \choose \lfloor n/2 \rfloor + 1}}$ intersecting antichains. Since $\lfloor \frac{n}{2} \rfloor + 1 = \lceil \frac{n+1}{2} \rceil$, the result follows.
\end{proof}

Antichains in the Boolean lattice are well studied and each intersecting antichain is also an antichain. Thus, we use the following theorem of Kleitman to determine an upper bound on the number of intersecting antichains in $\mathcal{P}(n)\setminus \{\emptyset\}$.

\begin{thm}[\cite{kleitman}]
\label{thm:kleitman}
    The number of antichains in $\mathcal{P}(n)$ is $2^{{n \choose n/2}(1+\text{o}(1))}$.
\end{thm}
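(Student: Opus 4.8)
The plan is to establish the two bounds separately. The lower bound $2^{{n \choose n/2}(1+o(1))} \le (\text{number of antichains})$ is immediate by the same device already used in Lemma~\ref{lem:lower_bound}: take the single middle layer $\binom{[n]}{\lfloor n/2 \rfloor}$, which is an antichain of size $\binom{n}{n/2}$, and observe that every one of its $2^{\binom{n}{n/2}}$ subsets is again an antichain. So the entire content of the theorem is the matching upper bound: the number of antichains in $\mathcal{P}(n)$ is at most $2^{\binom{n}{n/2}(1+o(1))}$.

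For the upper bound I would argue as follows. An antichain $A$ is determined by the down-set $D(A)$ it generates, and conversely $A$ is the set of maximal elements of that down-set; so it suffices to bound the number of down-sets of $\mathcal{P}(n)$, equivalently the number of monotone Boolean functions on $n$ variables (the Dedekind number $M(n)$). The heart of the proof is a counting argument localized around the middle three layers $L_{k-1}, L_k, L_{k+1}$ with $k = \lfloor n/2 \rfloor$. The key combinatorial input is that the bipartite ``containment'' graph between consecutive middle layers has very good expansion (a Kruskal–Katona / normalized-matching type statement): any down-set, when restricted to these layers, is essentially pinned down by its trace on the single largest layer $L_k$, up to a subexponential correction. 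Concretely, one shows that the number of down-sets is at most $2^{|L_k|} \cdot 2^{O(|L_k| \cdot \sqrt{\log n}/\sqrt{n})}$ or a similar bound, where the correction term in the exponent is $o(|L_k|) = o\!\left(\binom{n}{n/2}\right)$; the sources of the correction are (i) the freedom in the layers $L_{k-1}$ and $L_{k+1}$ not forced by the trace on $L_k$, and (ii) the contribution of all the remaining layers, which is crudely bounded using the fact that $\sum_{j} \binom{n}{j}$ over $j$ outside a window of width $\Theta(\sqrt{n \log n})$ around $n/2$ is a negligible fraction of $\binom{n}{n/2}$.

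The main obstacle — and the part requiring real work rather than bookkeeping — is quantifying the expansion/entropy-compression step: showing that specifying which sets of size $k$ lie in the antichain leaves only $2^{o(\binom{n}{n/2})}$ choices for the rest. This is where Kleitman's original argument (and later refinements by Kleitman–Markowsky and Korshunov) uses a careful analysis of the shadow function: for a subset $\mathcal{G} \subseteq L_k$, the number of elements of $L_{k-1}$ that lie below some member of $\mathcal{G}$ but not below every member — the ``undetermined'' part — is controlled by an isoperimetric inequality on the Boolean lattice, and one sums the resulting slack over all layers. Rather than reproduce this delicate estimate, I would cite it directly: the statement is exactly Theorem~\ref{thm:kleitman} as quoted from \cite{kleitman}, and for the purposes of this paper it is used as a black box. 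The only thing worth spelling out in our context is that the lower-bound construction with the middle layer shows the exponent $\binom{n}{n/2}$ is tight, so the $(1+o(1))$ in the exponent cannot be improved to a smaller constant factor — but since we only invoke the theorem as an upper bound in what follows, even that remark is optional.
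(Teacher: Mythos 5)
Your proposal is consistent with the paper's treatment: Theorem~\ref{thm:kleitman} is an external result quoted from \cite{kleitman} and used as a black box, with no proof given in the paper, and you correctly conclude that citing it is the right move here. Your sketch of the underlying argument (antichains $\leftrightarrow$ down-sets, the middle-layer lower bound, and the layer-by-layer compression for the upper bound) is an accurate outline of Kleitman's proof, but none of it is needed for the paper.
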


It follows from Theorem \ref{thm:kleitman} that the number of intersecting antichains in $\mathcal{P}(n)\setminus \{\emptyset\}$ is at most $2^{{n \choose n/2 }(1+\text{o}(1))}$.
We are now able to determine the number of scarce linear subsets of $\mathcal{P}(n)$.

\begin{lemma}
\label{lem:scarce_biases}
    $|\mathcal{Q}_s(n)| = 2^{{n \choose n/2}(1+o(1))}$.
\end{lemma}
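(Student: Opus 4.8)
The plan is to prove Lemma~\ref{lem:scarce_biases} by combining the lower bound already established in Lemma~\ref{lem:lower_bound} with the upper bound following from Kleitman's theorem (Theorem~\ref{thm:kleitman}), after translating through the identification $\mathcal{Q}_s(n) = \mathcal{A}_I(n)$ given by Lemma~\ref{lem:stable_sets}. The whole argument is a matter of showing the two bounds have matching exponents up to a $(1+o(1))$ factor, so the "proof" is really just bookkeeping of the form of the exponents.

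First I would record the lower bound: by Lemma~\ref{lem:stable_sets} we have $|\mathcal{Q}_s(n)| = |\mathcal{A}_I(n)|$, and by Lemma~\ref{lem:lower_bound} this is at least $2^{\binom{n}{\lceil (n+1)/2\rceil}}$. Since $\lceil (n+1)/2 \rceil$ differs from $\lfloor n/2 \rfloor$ by at most $1$, one checks that $\binom{n}{\lceil (n+1)/2 \rceil} = \binom{n}{n/2}(1 - o(1))$; indeed the ratio of two adjacent central binomial coefficients $\binom{n}{k+1}/\binom{n}{k}$ with $k$ near $n/2$ is $1 - O(1/\sqrt{n}) \to 1$. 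Hence $|\mathcal{Q}_s(n)| \ge 2^{\binom{n}{n/2}(1-o(1))}$.

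Next I would record the upper bound: every intersecting antichain in $\mathcal{P}(n)\setminus\{\emptyset\}$ is in particular an antichain of $\mathcal{P}(n)$, so $|\mathcal{A}_I(n)|$ is bounded above by the number of antichains of $\mathcal{P}(n)$, which by Theorem~\ref{thm:kleitman} is $2^{\binom{n}{n/2}(1+o(1))}$. Combining with Lemma~\ref{lem:stable_sets} gives $|\mathcal{Q}_s(n)| \le 2^{\binom{n}{n/2}(1+o(1))}$. Putting the two inequalities together, both the upper and lower bounds are $2^{\binom{n}{n/2}(1+o(1))}$, which is exactly the claimed asymptotic.

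There is no real obstacle here; the only mild subtlety is making sure that the lower-bound exponent $\binom{n}{\lceil (n+1)/2\rceil}$ is asymptotically the same as the central binomial coefficient $\binom{n}{n/2}$ appearing in Kleitman's theorem, so that the two bounds genuinely sandwich $|\mathcal{Q}_s(n)|$ at the same exponent. This follows from the elementary estimate that consecutive binomial coefficients near the middle have ratio tending to $1$, so both bounds can be written with the single normalization $\binom{n}{n/2}(1+o(1))$ in the exponent.
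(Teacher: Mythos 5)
Your proposal is correct and follows exactly the same route as the paper: the lower bound comes from Lemma~\ref{lem:lower_bound} via the identification in Lemma~\ref{lem:stable_sets}, the upper bound from Kleitman's Theorem~\ref{thm:kleitman}, and the only point to check is that $\binom{n}{\lceil (n+1)/2\rceil} = \binom{n}{n/2}(1+o(1))$, which you justify correctly via the ratio of adjacent near-central binomial coefficients.
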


\begin{proof}
Observe that ${n \choose \lceil (n+1)/2 \rceil} = {n \choose n/2}(1+o(1))$. 
Now it follows from Lemma \ref{lem:lower_bound} and Theorem \ref{thm:kleitman} that $|\mathcal{A}_I(n)| = 2^{{n \choose n/2}(1+o(1))}$.
Now the result follows from Lemma \ref{lem:stable_sets}.
\end{proof}

The last step needed to prove Theorem \ref{thm:ext_clique} is to show that $|\mathcal{Q}(n)|$ is not ``much'' larger than $|\mathcal{Q}_s(n)|$. To do this we use an approach similar to that used in \cite{peter_jorn}. The strategy is to map the linear sets to scarce linear sets and show that the number that map to a specific scarce linear set is ``small''.

Recall that a linear set $\mathcal{B}$ of $\mathcal{P}(n)$ is a subset of $\mathcal{P}(n)$ such that if $A,B \in \mathcal{B}$ are in a 3-set $\{X,Y,X\cup Y\}$ where $X$ and $Y$ are disjoint and nonempty, then the third set in the 3-set $\{X,Y,X\cup Y\}$ is in $\mathcal{B}$ as well. 
Define a triple of sets $\{X,Y,X\cup Y\}$ to be a \textit{related triple} if $X$ and $Y$ are disjoint nonempty subsets of $[n]$. 

\begin{lemma}
\label{lem:scarce_vs_not}
$|\mathcal{Q}(n)| \le |\mathcal{Q}_s(n)|\cdot 2^{2{n \choose \le n/3}}$.
\end{lemma}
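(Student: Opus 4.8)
The strategy is to define a map $\phi : \mathcal{Q}(n) \to \mathcal{Q}_s(n)$ that removes the "non-scarce" part of a linear set, and to bound the number of preimages of each scarce linear set by $2^{2{n \choose \le n/3}}$. The key structural observation should be this: if $\mathcal{B}$ is linear but not scarce, then some related triple $\{X,Y,X\cup Y\}$ is entirely contained in $\mathcal{B}$. I expect that the sets involved in such "full" triples are constrained in size — in a related triple $\{X,Y,X\cup Y\}$ with $X,Y$ disjoint and nonempty, the two smaller sets $X$ and $Y$ each have size at most $n/2$, and at least one of them, say the smaller, has size at most $n/3$ only if $|X\cup Y| \le$ something; more carefully, I would argue that the collection of sets that can appear as the \emph{minimal} element (under inclusion, or as one of the two "small" sides) of a full related triple in a linear set all have size at most roughly $n/3$, or come in a controlled way from such small sets. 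The exact bookkeeping here is the crux.

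Concretely, I would proceed as follows. First, given a linear set $\mathcal{B}$, let $\mathcal{B}_0 \subseteq \mathcal{B}$ be the union of all related triples that are fully contained in $\mathcal{B}$, and set $\phi(\mathcal{B}) = \mathcal{B} \setminus \mathcal{B}_0$. I would check that $\phi(\mathcal{B})$ is again a linear set (removing full triples cannot create a "two out of three" violation, since any related triple meeting $\phi(\mathcal{B})$ in exactly two sets must have had its third set removed, i.e. the whole triple was in $\mathcal{B}_0$ — but then all three are removed, contradiction) and that it is scarce (any full triple of $\phi(\mathcal{B})$ would be a full triple of $\mathcal{B}$, hence lies in $\mathcal{B}_0$ and was removed). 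So $\phi$ is well-defined into $\mathcal{Q}_s(n)$.

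Next, the counting of fibres. Fix $\mathcal{B}_s \in \mathcal{Q}_s(n)$; I want to bound $|\phi^{-1}(\mathcal{B}_s)|$. Any $\mathcal{B} \in \phi^{-1}(\mathcal{B}_s)$ is determined by $\mathcal{B}_s$ together with the "extra" set $\mathcal{B}_0 = \mathcal{B} \setminus \mathcal{B}_s$, which is a union of related triples. It therefore suffices to bound the number of possible sets $\mathcal{B}_0$, and for that it suffices to bound the number of sets that can appear in \emph{some} $\mathcal{B}_0$ — call this collection of "triple-active" sets $\mathcal{T}(n)$ — and then use $|\phi^{-1}(\mathcal{B}_s)| \le 2^{|\mathcal{T}(n)|}$. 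The claim I would aim to prove is $|\mathcal{T}(n)| \le 2{n \choose \le n/3}$. The size bound should come from: in any related triple $\{X,Y,X\cup Y\}$, the \emph{two} smaller sets $X$ and $Y$ satisfy $\min(|X|,|Y|) \le \lfloor |X \cup Y|/2 \rfloor$, but I actually want all three to be small — so the real point must be that a full triple inside a linear set forces further full triples (by applying linearity to $X$ and $X\cup Y$, say, though those aren't disjoint...) — I suspect instead that $\mathcal{B}_0$ decomposes so that $X\cup Y$ never needs to be listed separately because it is itself the "small side" of another triple, leaving only sets of the two small sides; and iterating, every triple-active set is dominated by a chain down to sets of size $\le n/3$, giving the factor $2$ and the binomial $\sum_{i \le n/3}\binom{n}{i} = {n\choose \le n/3}$.

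**Main obstacle.** The delicate part is pinning down exactly which sets can be "triple-active" and proving the bound $|\mathcal{T}(n)| \le 2{n\choose \le n/3}$ — i.e. showing that although $X \cup Y$ in a full related triple can have size close to $n$, such large sets either do not need to be stored (they are recoverable) or are few enough. Getting the constant right (the "$2$" and the "$n/3$") will require a careful combinatorial argument about how related triples chain together inside a linear set, and this is where I would expect to spend the real effort; the well-definedness of $\phi$ and the reduction $|\mathcal{Q}(n)| \le |\mathcal{Q}_s(n)| \cdot 2^{|\mathcal{T}(n)|}$ are routine by comparison.
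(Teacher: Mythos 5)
There is a genuine gap, and it sits exactly where you flagged it: the fibre count. Your plan is to bound $|\phi^{-1}(\mathcal{B}_s)|$ by $2^{|\mathcal{T}(n)|}$, where $\mathcal{T}(n)$ is the collection of all sets that can occur in a full related triple inside some linear set, and then to show $|\mathcal{T}(n)|\le 2\binom{n}{\le n/3}$. That last inequality is false for this definition of $\mathcal{T}(n)$: \emph{every} nonempty subset of $[n]$ occurs in some related triple (any $Z$ with $|Z|\ge 2$ is a union $X\cup Y$, and any proper nonempty $Z$ is a part $X$), and $\{X,Y,X\cup Y\}$ is itself a linear set, so $\mathcal{T}(n)=\mathcal{P}(n)\setminus\{\emptyset\}$ and your route gives only the trivial bound $2^{2^n-1}$. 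The true structural fact is weaker than ``triple-active sets are small'': since $X$, $Y$ and $[n]\setminus(X\cup Y)$ partition $[n]$, at least \emph{one} of the three sets $X,Y,X\cup Y$ satisfies $\min\{|Z|,|Z^c|\}\le n/3$ (note the complement -- your sketch considers only sizes, which is why your bookkeeping around $X\cup Y$ stalls). The other two members of a full triple can each have size and co-size close to $n/2$.

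The missing idea is that one should not try to bound the number of removed sets at all, but rather show that a linear set $\mathcal{B}$ is \emph{reconstructible} from $\phi(\mathcal{B})$ together with the trace $\mathcal{B}\cap\mathcal{S}$, where $\mathcal{S}=\{X:\min\{|X|,|X^c|\}\le n/3\}$ has size at most $2\binom{n}{\le n/3}$. To make this work one orders $\mathcal{P}(n)$ by $\min\{|X|,|X^c|\}$, so that the $\prec$-least member of every related triple lies in $\mathcal{S}$, and one defines $\phi$ to delete only the $\prec$-least and $\prec$-greatest members of each full triple, \emph{keeping the middle one}. Then an induction along $\prec$ shows injectivity of $\mathcal{B}\mapsto(\phi(\mathcal{B}),\mathcal{B}\cap\mathcal{S})$: at the first place two preimages could differ, the offending set must be the $\prec$-greatest member $X_2$ of a full triple, its two companions already agree in both preimages, and linearity forces $X_2$ to be present, a contradiction. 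Your version of $\phi$, which deletes all three members of each full triple, breaks this induction (the first disagreement could be the middle member $X_1$, whose companion $X_2$ comes later in $\prec$ and is not yet controlled), and you supply no substitute mechanism. So while your reduction to a fibre bound and your verification that $\phi(\mathcal{B})$ is scarce are fine, the central counting step is not established.
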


\begin{proof}
For each set $X \in \mathcal{P}(n)$, let $X^c$ denote the set $[n]\setminus X$.
Let $\prec$ be a linear ordering of $\mathcal{P}(n)$ such that $X\prec Y$ if $\min\{|X|,|X^c|\}<\min\{|Y|,|Y^c|\}$ for $X,Y \in \mathcal{P}(n)$.

If $\{X_0,X_1,X_2\}$ is a related triple, then without loss of generality $X_2 = X_0\cup X_1$.
Therefore, $\{X_0,X_1,X_2^c\}$ is a tripartition of $[n]$. Now it follows that at least one of $X_0,X_1,X_2^c$ has size at most $n/3$. This implies that $|X_i|$ or $|X_i^c|$ is at most $n/3$ for at least one of $i\in \{0,1,2\}$.

Let $\mathcal{S}$ be the collection of sets $X$ in $\mathcal{P}(n)\setminus \{\emptyset\}$ with $\min\{|X|,|X^c|\}\le n/3$. Notice that
\begin{equation*}
    |\mathcal{S}| \le 2\sum_{k=0}^{\lfloor n/3 \rfloor} {n \choose k}.
\end{equation*}

Let $\Lambda$ be the collection of all related triples $\{X_0,X_1,X_2\}$ where we always assume that $X_0\prec X_1\prec X_2$. By the discussion above, we know $X_0 \in \mathcal{S}$.
For each linear set $\mathcal{B} \subseteq \mathcal{P}(n)$, let $\phi(\mathcal{B})$ be obtained from $\mathcal{B}$ by simultaneously removing $X_0$ and $X_2$ for each triple $\{X_0,X_1,X_2\}\in \Lambda$ for which $\{X_0,X_1,X_2\} \subseteq \mathcal{B}$.

For all $\{X_0,X_1,X_2\}\in \Lambda$, since $|\{X_0,X_1,X_2\} \cap \mathcal{B}| \in \{0,1,3\}$, it follows that $|\{X_0,X_1,X_2\} \cap \mathcal{\phi(B)}|\le 1$; hence $\phi(\mathcal{B})$ is a scarce linear set.

\begin{claim}
For each scarce linear set $\mathcal{B}'\subseteq \mathcal{P}(n)$ and each set $\mathcal{X}\subseteq \mathcal{S}$, there exists at most one linear set $\mathcal{B}$ for which $\phi(\mathcal{B}) = \mathcal{B}'$ and $\mathcal{B}\cap \mathcal{S} = \mathcal{X}$.
\end{claim}

\begin{proof}
Suppose not. Thus, there exists a scarce linear set $\mathcal{B}'\subseteq \mathcal{P}(n)$ and distinct linear sets $\mathcal{B}_1,\mathcal{B}_2$ for which $\phi(\mathcal{B}_1) = \phi(\mathcal{B}_2) = \mathcal{B}'$ while $\mathcal{B}_1\cap \mathcal{S} = \mathcal{B}_2 \cap \mathcal{S}$.

Let $\mathcal{S}'$ be a maximal initial segment of $\mathcal{P}(n)\setminus \{\emptyset\}$ with respect to $\prec$ for which $\mathcal{B}_1\cap \mathcal{S}' = \mathcal{B}_2 \cap \mathcal{S}'$. We have $\mathcal{S} \subseteq \mathcal{S}' \neq \mathcal{P}(n)\setminus \{\emptyset\}$ by assumption.

Let $S \in (\mathcal{P}(n)\setminus \{\emptyset\}) \setminus \mathcal{S}'$ be minimal with respect to $\prec$. The maximality of $\mathcal{S}'$ implies that $S$ is in exactly one of $\mathcal{B}_1,\mathcal{B}_2$. Without loss of generality, say $S \in \mathcal{B}_1 \setminus \mathcal{B}_2$.

Since $\mathcal{B}' = \phi(\mathcal{B}_2) \subseteq\mathcal{B}_2$ and $S\notin \mathcal{B}_2$, we know that $S\notin \mathcal{B}' = \phi(\mathcal{B}_1)$. Since $S \in \mathcal{B}_1$, there exists a triple $\{X_0,X_1,X_2\}\in \Lambda$ where $\{X_0,X_1,X_2\}\subseteq \mathcal{B}_1$ and $S \in \{X_0,X_2\}$. Since $X_0 \in \mathcal{S}$ and $S\notin \mathcal{S}$, we know that $S = X_2$. 
By minimality of $S$ in $(\mathcal{P}(n)\setminus \{\emptyset\})\setminus \mathcal{S}'$ and since $X_0 \prec X_1\prec X_2$, it follows that $\{X_0,X_1\} \subseteq \mathcal{S}'$; hence $\{X_0,X_1\}\cap \mathcal{B}_2 = \{X_0,X_1\}\cap \mathcal{B}_1 = \{X_0,X_1\}$.
Now it follows that $X_0,X_1 \in \mathcal{B}_2$ and $|\{X_0,X_1,X_2\} \cap \mathcal{B}_2| = 2$, which is a contradiction.
\end{proof}

From the claim, we have that \begin{equation*}
    |\mathcal{Q}(n)| \le |\mathcal{Q}_s(n)|\cdot 2^{|\mathcal{S}|} \le |\mathcal{Q}_s(n)|\cdot 2^{2{n \choose \le n/3}}.\qedhere
\end{equation*}
\end{proof}

\vspace{2mm}
We are now ready to prove Theorem \ref{thm:ext_clique}.

\begin{proof}[Proof of Theorem \ref{thm:ext_clique}.]
Recall that extensions of $M(K_{n+1})$ correspond to linear sets by Lemma \ref{lem:bijection}.
The number of linear subsets of $\mathcal{P}(n)$ is at least the number of scarce linear subsets of $\mathcal{P}(n)$; therefore, by Theorem \ref{lem:scarce_biases} we find that the number of extensions is at least $2^{{n \choose n/2}(1+o(1))}$.
By Lemma \ref{lem:scarce_vs_not}, the number of extensions is at most
\begin{equation*}
    2^{{n \choose n/2}(1+o(1))} \cdot 2^{2{n \choose \le n/3}}.
\end{equation*}
Since $2{n \choose \le n/3} / {n \choose n/2} = o(1)$, it follows that the number of extensions is at most $2^{{n \choose n/2}(1+o(1))}$.
Combining the upper and lower bounds, we find that the number of extensions of $M(K_{n+1})$ is $2^{{n\choose n/2}(1+o(1))}$.
\end{proof}

\end{document}